\newcommand{\R}{\mathbb R}
\newcommand{\al}{\alpha}
\newcommand{\C}{\mathbb C}
\newcommand{\G}{\mathcal{G}}           \newcommand{\RA}{\mathcal{R}}
\newcommand{\So}{\mathcal{S}}
          \newcommand{\Pp}{\mathcal{P}}
\newcommand{\D}{\mathbb D}
\newcommand{\Ao}{\mathcal A}
\newcommand{\Ff}{\mathfrak F}
\newcommand {\Hol}{\mathop{\rm Hol}\nolimits(\D,\C)}
\newcommand  {\Id} {\mathop{\rm Id}\nolimits}
\newcommand {\HolD}{\mathop{\rm Hol}\nolimits(\D)}
\renewcommand{\Re}{\mathop{\rm Re}\nolimits}
\newtheorem{theorem}{Theorem}[section]
\newtheorem{lemma}[theorem]{Lemma}
\newtheorem{proposition}[theorem]{Proposition}
\newtheorem{corollary}[theorem]{Corollary}
\newtheorem{question}{Question}
\newtheorem{conjecture}{Conjecture}
\newtheorem{definition}[theorem]{Definition}
\newtheorem{example}[theorem]{Example}
\numberwithin{equation}{section}
\numberwithin{equation}{section}
\begin{document}
\title{On one filtration of holomorphic functions}


\author[F. Jacobzon]{Fiana Jacobzon}
\address{F. Jacobzon: Department of Mathematics, Braude College of Engeneering, Karmiel 21982, Israel.}
\email{fiana@braude.ac.il}

\date{\today}
\subjclass[2020]{Primary 47H20, 30C45; Secondary 30C80, 58D07}
\keywords{ infinitesimal generators; filtrations; semigroups; starlike functions; analytic extension}

\begin{abstract}
In this work we consider a family of function classes constructed by means of the Gauss hypergeometric function
$
_2F_1(1,1;2;z) =-\frac{\log(1-z)}{z}.
$

We demonstrate that this family, in fact, constitutes classes of analytic functions subject to prescribed constraints on their derivatives. 
For these classes we obtain some geometric characteristics, including sharp coefficient estimates. 

Moreover, we show that this family naturally provides a filtration of infinitesimal generators, and investigate the corresponding dynamical behavior of the associated semigroups.
It is interesting that this filtration links to the Ma--Minda starlike functions.
\end{abstract}

\maketitle

\bigskip

\setcounter{equation}{0}
\section{Introduction and preliminary results}\label{sect-preli-geom}
\setcounter{equation}{0}

 Let $\D$ be the open  unit disk in the complex plane $\C$. Denote by $\Hol$ the set of holomorphic functions on $\D$, and by $\HolD := \Hol$, the set of all holomorphic self-mappings of $\D$.

 Let us denote by $\Ao$ the subset of $\Hol$ consisting of  functions normalized by $f(0)=f'(0)-1=0$, and by $\So$ the subset of $\Ao$ consisting of univalent functions.  
 
 Another important subclass of $\Hol$ is the Carath\'eodory class 
 \[
 \Pp:=\left\{ q\in\Hol:\ \Re q(z)>0,\quad  z\in\D \quad\mbox{and}\quad q(0)=1 \right\}.
 \]
 
Let us recall the famous Noshiro--Warschawski theorem (see, for example, \cite{Duren})
 \begin{theorem}\label{thm-NW}
 	If $f\in\Hol$ and $\Re f'(z)>0$, then $f$ is univalent in $\D$.
 \end{theorem}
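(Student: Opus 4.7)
The plan is to exploit the convexity of the unit disk, which allows us to join any two points by a straight segment that stays in $\D$, and reduce injectivity to a statement about a line integral of $f'$ whose real part is strictly positive.

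Concretely, I would fix two distinct points $z_1,z_2\in\D$ and parametrize the segment between them by $\gamma(t)=(1-t)z_1+tz_2$ for $t\in[0,1]$. Since $\D$ is convex, $\gamma(t)\in\D$ for every $t$, and the fundamental theorem of calculus (applied to the holomorphic function $f$ along this smooth curve) yields
\[
f(z_2)-f(z_1)=\int_{0}^{1}\frac{d}{dt}f(\gamma(t))\,dt=(z_2-z_1)\int_{0}^{1}f'(\gamma(t))\,dt.
\]

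The key step is then to take real parts. By hypothesis $\Re f'(w)>0$ for every $w\in\D$, so the integrand $\Re f'(\gamma(t))$ is continuous and strictly positive on $[0,1]$. Therefore
\[
\Re\!\int_{0}^{1}f'(\gamma(t))\,dt=\int_{0}^{1}\Re f'(\gamma(t))\,dt>0,
\]
which forces the integral itself to be nonzero. Combined with $z_2-z_1\neq 0$, this gives $f(z_2)\neq f(z_1)$, establishing univalence.

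I do not foresee a real obstacle: the only subtlety is making sure that the segment connecting $z_1$ and $z_2$ lies inside the domain where the hypothesis on $f'$ holds, which is immediate from the convexity of $\D$. (In fact the same argument works verbatim on any convex subdomain of $\C$, which is the usual stated form of the Noshiro--Warschawski result.)
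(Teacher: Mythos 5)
Your proof is correct and is precisely the classical argument (integrating $f'$ along the straight segment joining two points of the convex domain and taking real parts) that the paper does not reproduce but instead cites from the literature, e.g.\ Duren's book. Nothing to add: the argument is complete and matches the standard proof of the Noshiro--Warschawski theorem.
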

 Thanks to this theorem, it begs to consider the class
 \[
 \RA:=\left\{ f\in\Ao:\ \Re f'\in\Pp  \right\},
 \]
 which is usually called the Noshiro--Warschawski class. More detailed information on this class can be found in \cite{Duren, E-V, GAW, G_K_book}.
 
 Recently, in several works \cite{BCDES, EJ-survey, ESS} it was noticed that every function  $f\in \RA$ is a semigroup generator (see Definition~\ref{def-generator} below). Moreover, elements of $\RA$ have very specified geometric and dynamical properties. These facts among  others served a base to develop theory of so-called filtrations of semigroup generators (see Definition~\ref{def-filt}).

It can be easily seen that if we replace the   Noshiro--Warschawski condition by a weaker one, namely, 
\begin{equation}\label{alpha-1}
f\in\Ao,\qquad \Re f'(z)>-\alpha
\end{equation}	
for some $\alpha>-1$, then there are non-univalent functions satisfying the last inequality. At the same time, we will see that for some positive $\alpha$ each function that satisfies \eqref{alpha-1} forced to be a generator.

\vspace{2mm}
Assume $f$ satisfies~\eqref{alpha-1}. In this paper the following questions are studied. 

\begin{question}
	What geometric properties does the function $f$ exibit?  
\end{question}

\begin{question}
	What analytical features characterize the function $f$?  
\end{question}

\begin{question}
	For which $\alpha$ does $f$ serve as a semigroup generator?
\end{question}

\begin{question}
	If $f$ is a semigroup generator, what are dynamical properties of this semigroup?
\end{question}


Note in passing, that the inequality \eqref{alpha-1} is closely connected 
the work \cite{EJ-lattice}, where  the set-theoretic properties of the two-parameter family of function classes defined by 
\[
\Re\left((s-1)\frac{f(z)}{z} +f'(z)\right) \ge st,\quad s>0, \, t\in[0,1),
\]
were studied.
Observe that \eqref{alpha-1} extends the last formula for $s=1$ to classes of functions whose derivatives may have negative real parts. In that work the Gau\ss \ hypergeometric function $_2F_1(1,s;s+1;z)$ plays a crucial role. Substituting $s=1$ we get
\[
_2F_1(1,1;2;z) =-\frac{\log(1-z)}{z}\,.
\]
This function will also prove to be important in the present study.

\vspace{2mm}
In what follows, we need an additional notion. Let $f, g\in\Hol$ and
\begin{equation*}\label{def-Omega}
\Omega=\{ \omega \in \HolD:\  \omega(0)=0 \}.
\end{equation*}
One says that $f$ is subordinate $g$ and writes $f\prec g$, if there exists a function $\omega\in\Omega$ such that $f(z) = g(\omega(z))$ for all $z\in\D.$ In the case where $g$ is univalent, conditions $f\prec g$ and $f(\D)\subseteq g(\D)$ are equivalent.

\vspace{2mm}

We will also make use of the main notions and facts from semigroup theory. For more details the reader can be refereed to \cite{B-C-DM-book, E-Sbook, SD}.
\begin{definition}\label{def-sg}
A family $\left\{ \phi _{t}\right\} _{t\geq 0}\subset\HolD$ is
called a {\sl one-parameter continuous semigroup} (or just {\sl semigroup}) if

(a) $\phi _{t+s}=\phi _{t}\circ \phi _{s},\ t,s\geq 0$; and

(b) $\lim\limits_{t\rightarrow 0^{+}}\phi _{t}=\Id$, where $\Id$ is
the identity map on $\D$ and the limit is taken with respect to the topology of uniform convergence on compact sets in $\D$.
\end{definition}

Moreover, according to the fundamental result by Berkson and Porta \cite{BE-PH} the family $\left\{ \phi _{t}\right\} _{t\geq 0}$ is differentiable with respect to its parameter
$t\geq 0,$ and the limit
\begin{equation}\label{generator}
f=\lim_{t\rightarrow 0^{+}}\frac{1}{t}\left( \Id-\phi _{t}\right),
\end{equation}
exists, and defines a holomorphic function on $\D$. Furthermore, $\phi _{t}$ is the solution of the Cauchy problem:
\begin{equation*}
\frac{\partial \phi _{t}(z)}{\partial t}+f(\phi _{t}(z))=0\quad \mbox{and} \quad \phi _{0}(z)= z\in \D.
\end{equation*}

\begin{definition}\label{def-generator}
The function $f\in\Hol$ defined by \eqref{generator} is called the  {\sl infinitesimal generator} of
the semigroup $\left\{ \phi _{t}\right\} _{t\geq 0}\subset\HolD.$
\end{definition}

Denote by $\G$ the class of all holomorphic generators on $\D$. 
 The following assertion contains a parametric representation of this class.
\begin{theorem}\label{th-gen-prop}
Let $f\in \Hol$. Then $f \in\G$ if and only if there are a point $\tau \in \overline{\D}$ and a function $q \in \Hol$ with $\Re q( z) \geq 0$, $z\in \D,$ such that
\begin{equation}\label{b-p}
f( z ) =\left( z-\tau \right) \left( 1-z\overline{\tau }\right) q( z),\quad z\in \D.
\end{equation}
Moreover, this representation is unique.
\end{theorem}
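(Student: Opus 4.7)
The plan is to prove necessity and sufficiency separately, with the Denjoy--Wolff theorem supplying the distinguished point $\tau$. For necessity, let $f\in\G$ with associated semigroup $\{\phi_t\}_{t\ge0}$; the Denjoy--Wolff theorem provides a point $\tau\in\oD$ which is either a common interior fixed point of the $\phi_t$ (when $\tau\in\D$) or their common boundary attractor (when $\tau\in\de\D$). Define
\[
q(z):=\frac{f(z)}{(z-\tau)(1-z\overline{\tau})}.
\]
If $\tau\in\D$, differentiating $\phi_t(\tau)=\tau$ at $t=0^+$ via \eqref{generator} gives $f(\tau)=0$, which removes the apparent singularity of $q$ at $\tau$. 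If $|\tau|=1$, the denominator equals $-\overline{\tau}(z-\tau)^2$ and does not vanish on $\D$ at all. Hence $q\in\Hol$ in both cases.

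The positivity $\Re q\ge 0$ is obtained by differentiating a semigroup-invariance inequality. In the interior case, the Schwarz--Pick inequality applied to $\phi_t$ at its fixed point $\tau$ shows that every pseudo-hyperbolic disk centred at $\tau$ is $\phi_t$-invariant, so $t\mapsto|(\phi_t(z)-\tau)/(1-\overline{\tau}\phi_t(z))|^2$ is non-increasing. A direct computation using $\de_t\phi_t\big|_{t=0}=-f(z)$ shows that its right derivative at $t=0$ equals
\[
-2(1-|\tau|^2)\,\frac{|z-\tau|^2}{|1-\overline{\tau}z|^2}\,\Re q(z),
\]
forcing $\Re q(z)\ge 0$ on $\D\setminus\{\tau\}$ and hence on all of $\D$ by holomorphy. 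When $|\tau|=1$, the same strategy is carried out with horocycles at $\tau$, which are $\phi_t$-invariant by Julia's lemma.

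For sufficiency, given $f$ of the form \eqref{b-p}, the Cauchy problem $\dot w=-f(w)$, $w(0)=z$ has a unique local holomorphic solution by ODE theory. The decisive step is forward-invariance of $\D$: running the computation above in reverse, $\Re q\ge 0$ forces the pseudo-hyperbolic distance from $w(t)$ to $\tau$ (respectively the horocyclic distance when $|\tau|=1$) to be non-increasing, which traps the orbit in $\D$ and extends it to all $t\ge 0$. Setting $\phi_t(z):=w(t;z)$ yields a semigroup whose generator is $f$. Uniqueness follows because two distinct Berkson--Porta representations of the same $f$ would force the ratio of the two Carath\'eodory-type factors $q_1/q_2$ to be a non-constant rational function taking values in a closed half-plane on $\D$, which is impossible. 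The main technical obstacle is the boundary case $\tau\in\de\D$, where the fixed point exists only as an angular limit and both the invariance property and the pointwise differentiation at $t=0^+$ must be carried out within the Julia--Wolff--Carath\'eodory framework rather than via ordinary Schwarz--Pick.
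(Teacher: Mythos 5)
First, note that the paper itself does not prove this statement: Theorem~\ref{th-gen-prop} is quoted from Berkson--Porta \cite{BE-PH}, so there is no internal proof to compare against. Your strategy is the standard modern route (common Denjoy--Wolff point, invariance of Schwarz--Pick disks in the interior case and of horocycles via Julia's lemma in the boundary case, differentiation at $t=0^+$), and the necessity half is essentially sound modulo standard facts you are implicitly using: the existence of a \emph{common} Denjoy--Wolff point for the whole semigroup (including the elliptic-automorphism case, where the classical Denjoy--Wolff theorem for a single map gives no convergence), and the differentiability in $t$, which in the paper's setting is part of the cited Berkson--Porta theorem. Your computation of the derivative of $t\mapsto\bigl|\frac{\phi_t(z)-\tau}{1-\overline{\tau}\phi_t(z)}\bigr|^2$ is correct and does yield $\Re q\ge 0$ when $\tau\in\D$.

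There are, however, two genuine gaps. (1) In the sufficiency step with $\tau\in\partial\D$, ``the horocyclic quantity is non-increasing, which traps the orbit in $\D$ and extends it to all $t\ge0$'' is not enough: a horocycle is internally tangent to $\partial\D$ at $\tau$, so being trapped in it does not exclude the orbit reaching the boundary point $\tau$ in finite time, which would terminate the flow. You need an extra quantitative estimate, e.g.\ a Gronwall-type inequality for $1-|w(t)|^2$ (or a lower bound for $|w(t)-\tau|$) obtained from $\frac{d}{dt}\log|w(t)-\tau|^2=-2\Re\bigl[\overline{\tau}(\tau-w)q(w)\bigr]$ together with the Carath\'eodory growth estimate for $q$ inside a horocycle, to rule out finite-time escape. (2) The uniqueness argument is based on a false claim: if $f=(z-\tau_1)(1-\overline{\tau}_1z)q_1=(z-\tau_2)(1-\overline{\tau}_2z)q_2$, the ratio $q_1/q_2$ of two functions with nonnegative real part need \emph{not} take values in a closed half-plane; all one gets from $|\arg q_i|\le\pi/2$ is that $q_1/q_2$ omits the closed negative ray $(-\infty,0]$. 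The argument can be repaired (interior $\tau$: $f(\tau_1)=0$ forces $q_2(\tau_1)=0$ or $\tau_1=\tau_2$, and the minimum principle for $\Re q_2$ kills the first option unless $f\equiv0$; both points on $\partial\D$: one checks that the explicit rational ratio maps $\D$ onto the complement of the \emph{positive} ray, hence attains negative values, a contradiction, or one identifies $\tau$ dynamically as the unique Denjoy--Wolff point), but as written the step does not stand. Finally, uniqueness fails for $f\equiv0$, so that degenerate case should be excluded or treated separately.
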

We notice that formula \eqref{b-p}  was obtained by Berkson and Porta in \cite{BE-PH} and is called the {\it Berkson--Porta representation}. 

We further focus on the case $\tau\in\D$.  Up to the M\"obius transformation $M_\tau\ \left(M_\tau(z)=\frac{\tau-z}{1-z\overline\tau}\right)$ of the unit disk, one can always consider generators such that $f(0) =0,$ or, what is the same, $\phi_t(0) =0$ for all $t\geq 0$. It follows from Theorem~\ref{th-gen-prop} that $f\in \Ao$ is an infinitesimal generator if and only if  $\displaystyle\Re \frac{f(z)}{z}> 0$ for all $z\in \D \setminus \{0\}$.
We denote
\begin{equation*}
\G_0:=\mathcal{A}\cap \G= \left\{ f\in\Hol: \ \frac{f(z)}z\in\Pp\right\}.
\end{equation*}

The class $\G_0$ is very important for the study of non-autonomous problems and geometric function theory (see, for example, \cite{B-C-DM-book, Duren, E-R-S-04, E-Sbook, R-S1}).

\vspace{2mm}
Over the years, the study of the asymptotic behavior of semigroups was mainly focused on the local/global rate of convergence and the growth estimates of a semigroup with respect to its parameter. Different estimates of the rate of convergence of semigroups were obtained, see, for example, the books \cite{B-C-DM-book, E-Sbook, SD}, the survey \cite{JLR} and references therein.

In view of subsequent results, we recall the following notion. 
\begin{definition}[see \cite{BCDES}]\label{def-squee}
Let $\{\phi _{t}(z)\}_{t\ge 0}$ be a semigroup. If there exists a constant $k> 0$ such that 
\begin{equation*}
|\phi _{t}(z)|\leq e^{-kt}|z|\quad \text{ for all } \quad z\in \mathbb{D}, 
\end{equation*}
then $\left\{ \phi _{t}\right\} _{t\geq 0}$  is said to be \textsl{exponentially squeezing semigroup with squeezing ratio $k$.}
\end{definition}
The following criterion for a semigroup to be exponentially squeezing holds.
\begin{theorem}\label{th-sqeezing}
Let $f \in \G_0$ and $\{\phi_{t}\}_{t\geq 0}$ be the semigroup generated by $f$. Then  $\{\phi_{t}\}_{t\geq 0}$ is exponentially squeezing with squeezing ratio $k>0$ if and only if $\displaystyle\Re\frac{f(z)}z\geq k$ on $\D$.
\end{theorem}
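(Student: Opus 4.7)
The plan is to link $|\phi_t(z)|$ to $\Re\tfrac{f(z)}{z}$ through a direct ODE computation, and then read off each implication by respectively integrating or differentiating.

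First I would fix $z\in\D\setminus\{0\}$ and observe that since $f(0)=0$ and $\phi_t(0)=0$, the map $t\mapsto\phi_t(z)$ never hits the origin for $z\ne 0$ (this follows from uniqueness of the Cauchy problem). Thus $\log|\phi_t(z)|$ is a smooth function of $t\ge 0$. Using the Cauchy problem
$\dot\phi_t(z)=-f(\phi_t(z))$,
I would differentiate $\log|\phi_t(z)|^2=\log(\phi_t(z)\overline{\phi_t(z)})$ to get the key identity
\begin{equation*}
\frac{d}{dt}\log|\phi_t(z)| \;=\; -\,\Re\frac{f(\phi_t(z))}{\phi_t(z)}.
\end{equation*}

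For the ``if'' direction, assume $\Re\tfrac{f(w)}{w}\ge k$ for all $w\in\D\setminus\{0\}$. Since $\phi_t(z)\in\D\setminus\{0\}$, the identity gives $\frac{d}{dt}\log|\phi_t(z)|\le -k$; integrating from $0$ to $t$ yields $\log|\phi_t(z)|-\log|z|\le -kt$, which is exactly $|\phi_t(z)|\le e^{-kt}|z|$. The inequality extends trivially to $z=0$.

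For the ``only if'' direction, assume $|\phi_t(z)|\le e^{-kt}|z|$ for every $z\in\D$ and $t\ge0$. For $z\ne 0$ I would take logarithms, subtract $\log|z|$, divide by $t>0$, and let $t\to 0^+$; the right-hand derivative of $\log|\phi_t(z)|$ at $t=0$ is $-\Re\tfrac{f(z)}{z}$ by the identity above, so $-\Re\tfrac{f(z)}{z}\le -k$, i.e.\ $\Re\tfrac{f(z)}{z}\ge k$. The case $z=0$ then follows by continuity since $\tfrac{f(z)}{z}\to f'(0)$ as $z\to 0$.

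I do not anticipate a serious obstacle: the only mildly delicate point is justifying that $\phi_t(z)\ne 0$ for all $t\ge 0$ when $z\ne 0$ (needed so that $\log|\phi_t(z)|$ and $\tfrac{f(\phi_t(z))}{\phi_t(z)}$ are well defined along the trajectory), but this is immediate from the uniqueness of solutions to the autonomous ODE since $0$ is itself a stationary point.
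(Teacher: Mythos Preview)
Your argument is correct and standard: the key identity $\tfrac{d}{dt}\log|\phi_t(z)|=-\Re\tfrac{f(\phi_t(z))}{\phi_t(z)}$ (valid since $\phi_t(z)\ne 0$ for $z\ne 0$ by uniqueness at the stationary point $0$) immediately gives both directions, by integration and by differentiation at $t=0$ respectively.

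There is nothing to compare against, however: in the paper this theorem is stated in the preliminaries as a known criterion and is not proved there. It is quoted from the literature (the books of Bracci--Contreras--D\'{\i}az-Madrigal, Elin--Shoikhet, and Shoikhet, and the survey of Jacobzon--Levenshtein--Reich cited nearby). Your proof is exactly the classical one found in those references, so in that sense it matches the intended argument.
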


Another direction of research on semigroup properties is focuses on the possibility of analytic extension with respect to the semigroup parameter into a complex domain.
The problem is to identify conditions that allow such an extension, along with estimates of the sector in $\C$ to which this extension can be performed. For recent results in this direction see \cite{ACP, E-S-T, E-J}, see also \cite{E-R-S-19}.

More specifically, fix $\theta\in\left(0,\frac\pi2\right]$ and denote
\begin{equation}\label{sector}
\Lambda(\theta) = \left\{\zeta\in\C: |\arg \zeta| <\theta
 \right\}.
\end{equation}

\begin{theorem}  [Theorem~2.12 in \cite{E-S-T}]\label{thm-rotation}
Let $\{ \phi_t \}_{t \geq 0}$ be a semigroup of holomorphic self-mappings of $\D$ generated by $f\in\G_0$, and let $\alpha\in\left[0,1\right)$. Then $\{ \phi_t \}_{t \geq 0}$ extends analytically to the sector $\Lambda\left(\displaystyle\frac{\pi(1-\al)}2\right)$ in $\C$ if and only if $\displaystyle\left|\arg \frac{f(z)}z \right|\leq \frac{\pi\alpha}2$ on $\D\setminus \{0\}$.
\end{theorem}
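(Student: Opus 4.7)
The plan is to prove both implications by relating rotation of the time parameter by $e^{i\beta}$ to replacement of the generator $f$ by $e^{i\beta}f$, and then invoking the Berkson--Porta description of generators vanishing at the origin (Theorem~\ref{th-gen-prop}).

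For the necessity direction, suppose $\{\phi_t\}$ extends to a family $\{\phi_\zeta\}_{\zeta\in\Lambda(\pi(1-\al)/2)}\subset\HolDD$. Analytic continuation in $\zeta$ starting from the real axis propagates the identity $\phi_{t+s}=\phi_t\circ\phi_s$ to $\phi_{\zeta+\zeta'}=\phi_\zeta\circ\phi_{\zeta'}$ throughout the sector. For any fixed $\beta$ with $|\beta|<\pi(1-\al)/2$, the family $\psi_s^\beta(z):=\phi_{se^{i\beta}}(z)$, $s\geq 0$, is therefore a real one-parameter semigroup in $\HolDD$, and its infinitesimal generator is
\begin{equation*}
g_\beta(z)=-\left.\frac{\partial\psi_s^\beta(z)}{\partial s}\right|_{s=0} =-e^{i\beta}\left.\frac{\partial\phi_t(z)}{\partial t}\right|_{t=0} =e^{i\beta}f(z).
\end{equation*}
Theorem~\ref{th-gen-prop} applied to $g_\beta$ (which vanishes at $0$, so $\tau=0$) gives $\Re(e^{i\beta}f(z)/z)\geq 0$ on $\D\setminus\{0\}$. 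Letting $\beta\to\pm\pi(1-\al)/2$ confines the values of $f(z)/z$ to the closed sector $\{w:|\arg w|\leq\pi\al/2\}$.

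For the sufficiency direction, assume $|\arg(f(z)/z)|\leq\pi\al/2$ on $\D\setminus\{0\}$. For each $\beta$ with $|\beta|<\pi(1-\al)/2$ the triangle inequality yields
\begin{equation*}
\left|\arg\frac{e^{i\beta}f(z)}{z}\right|\leq \frac{\pi\al}{2}+\frac{\pi(1-\al)}{2}<\frac{\pi}{2},
\end{equation*}
so $\Re(e^{i\beta}f(z)/z)>0$ on $\D\setminus\{0\}$ and $e^{i\beta}f$ is a generator vanishing at $0$; hence it produces a semigroup $\{\psi_s^\beta\}_{s\geq 0}\subset\HolDD$. Define the extension by $\phi_{se^{i\beta}}(z):=\psi_s^\beta(z)$, which is unambiguous on the open sector via polar coordinates. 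The cleanest way to obtain holomorphy in $\zeta$ together with the semigroup identity across different rays is to recognize $\phi_\zeta(z)$ as the solution of the holomorphic Cauchy problem
\begin{equation*}
\frac{\partial w}{\partial\zeta}+f(w)=0,\qquad w(0)=z,
\end{equation*}
which has a unique local holomorphic solution by standard theory of holomorphic ODEs. By uniqueness, the restriction of this solution to the ray $\zeta=se^{i\beta}$ agrees with $\psi_s^\beta(z)$, so the solution prolongs along every ray of the sector without leaving $\D$. Joint holomorphy in $(\zeta,z)$ then follows from holomorphic dependence of ODE solutions on initial data, and the semigroup identity $\phi_{\zeta+\zeta'}=\phi_\zeta\circ\phi_{\zeta'}$ is inherited from the real case by analytic continuation.

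The main obstacle is the gluing step in the sufficiency direction: the rotated semigroups $\{\psi_s^\beta\}$ must be shown to fit together into a single function holomorphic on the open sector, not merely to agree with the prescribed flow along each ray. The holomorphic ODE viewpoint resolves this, but one has to verify that the flow stays inside $\D$ off the rays as well. This is where the openness of the sector (i.e.\ strict inequality $|\beta|<\pi(1-\al)/2$) is essential, since it gives strict inequality in the argument bound and hence a genuine interior point of $\G_0$ for every rotated generator.
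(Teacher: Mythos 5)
First, note that the paper contains no proof of this statement: it is imported verbatim as Theorem~2.12 of \cite{E-S-T}, so there is no in-paper argument to compare with. Your strategy (rotating the generator, $\zeta=se^{i\beta}\leftrightarrow e^{i\beta}f$, plus the Berkson--Porta criterion and a holomorphic-flow argument) is the standard route to this result, and its overall architecture is right; however, two steps are asserted where the actual work lies. In the necessity direction you compute the generator of $\psi^\beta_s=\phi_{se^{i\beta}}$ by differentiating at $s=0$, but the extension is only assumed holomorphic on the \emph{open} sector, so neither continuity nor differentiability at the vertex along non-real rays is given; you also need $\psi^\beta_s\to\Id$ as $s\to0^+$ before you may speak of ``the'' generator. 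The clean repair is to extend the identity $\partial_\zeta\phi_\zeta(z)=-f(\phi_\zeta(z))$ from the positive real axis to the whole sector by the identity theorem (both sides are holomorphic in $\zeta$), so that $\psi^\beta$ solves $\partial_s w=-e^{i\beta}f(w)$ for $s>0$, and to get $\phi_\zeta(z)\to z$ along each ray of a proper subsector by a Lindel\"of/Vitali-type argument for the bounded function $\zeta\mapsto\phi_\zeta(z)$; only then does Berkson--Porta give $\Re\bigl(e^{i\beta}f(z)/z\bigr)\ge0$ and your limiting argument in $\beta$.

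In the sufficiency direction the gluing is precisely what you delegate to ``standard theory'': saying that by uniqueness the local solution ``prolongs along every ray'' begs the question, since what must be proved is that the complex-time solution exists and is holomorphic on the full two-dimensional sector, not merely that a real-time flow exists on each ray. Moreover, the obstacle you single out (``the flow stays inside $\D$ off the rays'') is vacuous, because every point of $\Lambda\bigl(\pi(1-\al)/2\bigr)$ lies on a ray through the vertex; once the ray-wise definition is shown holomorphic, $|\phi_\zeta(z)|\le|z|$ is automatic since $\Re\bigl(e^{i\beta}f(w)/w\bigr)\ge0$ makes the modulus nonincreasing along each ray. A complete argument runs, e.g.: along a fixed ray the flow on a compact time interval stays in a compact subset of $\D$, so local complex-time solutions exist on disks of uniform radius centered on that segment and, agreeing with $\psi^\beta$ at their centers, patch by the identity theorem into a holomorphic solution on a (convex) neighborhood of the ray; two such neighborhoods have connected intersection on which both extensions coincide with the local solution at the vertex, giving a single holomorphic $\phi_\zeta$ on the sector. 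Alternatively, one uses the more common device of writing $\zeta=ue^{i\beta_1}+ve^{i\beta_2}$, setting $\phi_\zeta:=\psi^{\beta_1}_u\circ\psi^{\beta_2}_v$ (well defined because the flows of $e^{i\beta_1}f$ and $e^{i\beta_2}f$ commute), and checking the Cauchy--Riemann equation directly from $\partial_u\phi_\zeta=-e^{i\beta_1}f(\phi_\zeta)$, $\partial_v\phi_\zeta=-e^{i\beta_2}f(\phi_\zeta)$. Finally, your displayed chain $\left|\arg\bigl(e^{i\beta}f(z)/z\bigr)\right|\le\frac{\pi\al}2+\frac{\pi(1-\al)}2<\frac\pi2$ is wrong as written (the sum equals $\pi/2$); the strictness must come from $|\beta|<\pi(1-\al)/2$, i.e. $\left|\arg\bigl(e^{i\beta}f(z)/z\bigr)\right|\le\frac{\pi\al}2+|\beta|<\frac\pi2$.
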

In the same work \cite{E-S-T} a criterion for analytic extension was established.

\begin{theorem}\label{thm-analytic}
Let $f\in\G$ and  $\theta\in\left[0,1\right)$. 
Then the semigroup generated by $f$  extends analytically to the sector
$\quad \Lambda\left(\frac{\pi(1-\theta)}2\right)$ in $\C$ if and only if  
$$\left|\arg \frac{f(z)}z \right|\leq \frac{\pi\theta}2 \ \text{ for all } \ z\in \D\setminus \{0\}.$$ 
\end{theorem}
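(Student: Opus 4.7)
The plan is to deduce the theorem by rotating the generator: analytic extendability of $\{\phi_t\}$ to the sector $\Lambda(\pi(1-\theta)/2)$ is equivalent to $e^{i\psi}f$ being an infinitesimal generator for every $\psi\in(-\pi(1-\theta)/2, \pi(1-\theta)/2)$, and one then translates this via the Berkson--Porta characterization of $\G_0$.

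A preliminary observation is that the right-hand argument condition in fact forces $f(0)=0$, so that $f\in\G_0$. If instead $f(0)\neq 0$, then $f(z)/z$ admits a simple pole at the origin; writing $z=re^{i\varphi}$ with $r$ small, $\arg(f(z)/z)=\arg f(0)-\varphi+O(r)$ sweeps through an interval of length nearly $2\pi$ as $\varphi$ traverses $(-\pi,\pi]$, which contradicts the strict bound $\pi\theta/2<\pi/2$. Hence one may assume $f\in\G_0$, in which case $\tau=0$ in \eqref{b-p} and the quotient $f(z)/z$ coincides with the Berkson--Porta factor $q(z)$.

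For the \emph{only if} direction, suppose $\{\phi_t\}$ extends analytically to $\Lambda(\pi(1-\theta)/2)$. Differentiating $r\mapsto\phi_{re^{i\psi}}(z)$ at $r=0$ shows that each such rotated family is itself a semigroup of holomorphic self-mappings of $\D$, generated by $e^{i\psi}f$; hence $e^{i\psi}f\in\G_0$, which by the Berkson--Porta criterion is equivalent to $\Re(e^{i\psi}f(z)/z)\geq 0$ on $\D\setminus\{0\}$. Intersecting these half-plane constraints over all $\psi\in(-\pi(1-\theta)/2, \pi(1-\theta)/2)$ yields exactly $|\arg(f(z)/z)|\leq \pi\theta/2$. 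The \emph{if} direction reverses the construction: given the argument bound, $e^{i\psi}f\in\G_0$ for every $\psi$ in the sector, each generating a semigroup $\{\phi^{(\psi)}_r\}$, and one defines the extension by $\Phi(re^{i\psi},z):=\phi^{(\psi)}_r(z)$.

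I expect the main obstacle to be verifying that $\Phi(t,z)$ is jointly holomorphic in $t$ across rays rather than merely separately holomorphic along each ray. This is typically handled by characterising $\Phi(t,z)$ as the unique solution of the complex ODE $\partial_t\Phi+f(\Phi)=0$ with $\Phi(0,z)=z$, and invoking analytic dependence on the parameter together with a normal-families argument to glue the solutions from adjacent directions into a single holomorphic map on the full sector.
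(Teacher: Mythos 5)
The paper contains no proof of this statement: Theorem~\ref{thm-analytic}, like Theorem~\ref{thm-rotation}, is quoted from \cite{E-S-T}, so there is no internal argument to compare yours against; I can only assess your proposal on its own terms. Your strategy --- analytic extendability to $\Lambda\left(\frac{\pi(1-\theta)}2\right)$ should amount to $e^{i\psi}f$ being a generator for every $|\psi|<\frac{\pi(1-\theta)}2$, translated through the Berkson--Porta criterion --- is indeed the standard route to results of this kind and is in the spirit of the cited source, but as written it has two genuine gaps.

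First, your reduction to $\G_0$ only works in the ``if'' direction. The argument bound does force $f(0)=0$, but in the ``only if'' direction you assume extendability and then assert ``hence $e^{i\psi}f\in\G_0$'': rotating a generator whose Denjoy--Wolff point is $\tau\neq 0$ only yields $e^{i\psi}f\in\G$, and Berkson--Porta then controls $\arg\bigl(f(z)/((z-\tau)(1-\overline{\tau}z))\bigr)$, not $\arg(f(z)/z)$. In fact, read literally for all $f\in\G$ that direction of the statement fails: the parabolic semigroup conjugate to the translations $w\mapsto w+t$ of the right half-plane (generator $f(z)=(z-1)(1-z)q$ with $q\equiv\tfrac12$, Denjoy--Wolff point $\tau=1$) extends analytically to $\Lambda\left(\frac\pi2\right)$, while $\arg(f(z)/z)$ takes all values near $z=0$. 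So the interior-fixed-point hypothesis, as in Theorem~\ref{thm-rotation}, must be imposed rather than derived there. Second, in the ``if'' direction the joint holomorphy of $\Phi(re^{i\psi},z):=\phi^{(\psi)}_r(z)$ in $t=re^{i\psi}$ is the heart of the matter, and you leave it at ``analytic dependence plus normal families''. A complete argument needs, for instance, the identity $f\bigl(\phi^{(\psi)}_s(w)\bigr)=\bigl(\phi^{(\psi)}_s\bigr)'(w)\,f(w)$, which shows that $\phi^{(\psi_1)}_{r_1}\circ\phi^{(\psi_2)}_{r_2}(z)$ depends only on $r_1e^{i\psi_1}+r_2e^{i\psi_2}$, followed by a Cauchy--Riemann computation (the relations $\partial\Phi/\partial r_j=-e^{i\psi_j}f(\Phi)$ in two independent directions force $\overline{\partial}_t\Phi=0$); alternatively one can use the Koenigs linearization $\phi_t=h^{-1}\bigl(e^{-f'(0)t}h\bigr)$, which makes analyticity in $t$ automatic and reduces the extension to the geometric inclusion $e^{-f'(0)t}h(\D)\subseteq h(\D)$ for $t$ in the sector --- essentially the approach of \cite{E-S-T}. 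As it stands, your proposal identifies the right strategy but closes neither of these steps.
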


These directions in study of dynamical systems highlight the importance of classifying generators according to the dynamical properties of the semigroups they generate. To this end, we introduce a filtration (or parametric embedding) of infinitesimal generators.

\begin{definition}[see \cite{BCDES, EJ-survey, ESS}]\label{def-filt}
Let $J$ be a connected subset of $\R$.
A {\sl filtration} of $\G_0$ is a family
$\mathfrak{F}= \left \{\Ff_s\right\}_{s\in J},\
\mathfrak{F}_s\subseteq\G_0,$ such that $\mathfrak F_s\subseteq \mathfrak F_t$ whenever $s,t \in J$ and $s\le t$.\\
Moreover, if $\mathfrak F_s\subsetneq \mathfrak F_t$ for
$s<t\ s,t\in J$, then we say that the filtration $\left \{ \mathfrak F_s\right\}_{s\in J}$ is {\sl strict}.  In this case for $t\in J$ we set
\begin{equation}\label{boundary}
\partial\mathfrak{F}_t=\mathfrak{F}_t \setminus\bigcup_{s\in J, s<t}\mathfrak{F}_s.
\end{equation}
\end{definition}

Let a filtration $\mathfrak{F}= \left\{\Ff_s\right\}_{s\in J}$ be given. We now split Question 4 into two detailed parts for clarity.
\begin{itemize}
  \item   Determine the sharp squeezing ratio for all semigroups generated by elements of each filtration set $\Ff_s$;

  \item   Identify the maximal sector into which all these semigroups admit an analytic extension.
\end{itemize}

Note that if the boundaries defined by \eqref{boundary} are not empty for every $t\in J$, then the filtration $\mathfrak{F}$ is strict although for a strict filtration boundaries of its elements might be empty.

The study of  different classes of functions involves the searching for extremals. We will use the following notion:

\begin{definition}[see \cite{BCDES}]\label{def-extrem}
Let $\mathcal F\subset\mathcal{A}$. We say that a function $f_*\in\mathcal{F}$
is {\sl totally extremal} for $\mathcal F$ if for every $\lambda\in\C$ and  $r\in [0,1]$
\[
\min_{|z|=r} \Re \left(\lambda\frac {f(z)}{z}\right)\ge
\min_{|z|=r}\Re\left(\lambda \frac {f_*(z)}{z}\right)  \text{ for all } f\in\mathcal F.
\]
\end{definition}

\begin{definition}\label{def-extrem1}
We say that a filtration $\mathfrak{F}=\{\mathfrak F_s\}_{s\in J}$ admits a net $\left\{f_s\right\}_{s\in J}$ of {\sl
totally extremal functions} if for every $s\in J $, the function $f_s$ is totally extremal for the class~$\mathfrak F_s$.
\end{definition}
\bigskip

\section{The class $\mathcal{L_\alpha}$}
\setcounter{equation}{0}

Let us consider classes  of normalized holomorphic functions.
\begin{definition}
For $\alpha \geq-1$ define
\begin{equation}\label{L-by-estim-der}
\mathcal{L}_\alpha:=\{f\in\mathcal{A}:\Re f'(z)\geq-\alpha\}. 
\end{equation}

\end{definition}

Obviously, if $\alpha<\beta$ then $\mathcal{L}_\alpha \subsetneq\mathcal{L}_\beta$.

\begin{example}\label{ex-varphi-in-L}
  Fix $\alpha<-1$. The function $f(z)=z\varphi_\alpha (z)$ where 
  \begin{equation}\label{varphi}
\displaystyle\varphi_\alpha(z)=-(1+2\alpha)-2(1+\alpha)\frac{\log(1-z)}{z}, \quad z \in \D
  \end{equation}
belongs to the class $\mathcal{L}_\alpha $.\\
Indeed, a straightforward computation shows that $\displaystyle\Re f'(z)=-(1+2\alpha)+2(1+\alpha)\Re\frac{1}{1-z}$
 and $\displaystyle\inf_{z\in\D}\Re\frac{1}{1-z}=\frac{1}{2}$.  Hence, $\displaystyle\Re f'(z)>-\alpha$ as required.
\end{example}

\bigskip

\subsection{Analytic and geometric features of $\mathcal{L_\alpha}$}
\setcounter{equation}{0}

The next result establishes a representation of the elements in $\mathcal{L}_\alpha$.
\begin{lemma}\label{LM-elementL}
\begin{equation}\label{L-by-int-varphi}
  \mathcal{L}_\alpha =\left\{f\in \Ao:\,  f(z)=z\int_{\partial\D}\varphi_\alpha(z\overline{\zeta}) d\mu(\zeta)\right\},
\end{equation}
where $\varphi_\alpha$ defined by~\eqref{varphi} and $\mu$ is a probability measure on the unit circle.
\end{lemma}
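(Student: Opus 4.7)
The plan is to reduce the lemma to the classical Herglotz representation of the Carath\'eodory class $\Pp$. The key observation is that, for $\alpha>-1$, the defining inequality $\Re f'(z)\ge -\alpha$ together with the normalization $f'(0)=1$ is equivalent to saying that
\[
p(z):=\frac{f'(z)+\alpha}{1+\alpha}
\]
belongs to $\Pp$. The map $f\mapsto p$ is clearly a bijection between $\mathcal{L}_\alpha$ and $\Pp$ (the degenerate case $\alpha=-1$ forces $\Re f'(z)\ge 1=\Re f'(0)$, hence $f'\equiv 1$ and $f(z)=z$ by the minimum principle for harmonic functions, which matches the formula since $\varphi_{-1}\equiv 1$).

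Next I would invoke the Herglotz formula
\[
p(z)=\int_{\partial\D}\frac{1+z\overline{\zeta}}{1-z\overline{\zeta}}\,d\mu(\zeta),
\]
with $\mu$ a probability measure on $\partial\D$, and recover $f$ by integrating $f'(z)=(1+\alpha)p(z)-\alpha$ from $0$ to $z$. After swapping order of integration (justified by Fubini, since the integrand is continuous in $\zeta$ and holomorphic in $w$), the inner integral evaluates to
\[
\int_0^z\frac{1+w\overline{\zeta}}{1-w\overline{\zeta}}\,dw \;=\; -z-\frac{2}{\overline{\zeta}}\log(1-z\overline{\zeta}).
\]
Combining this with the $-\alpha z$ term (rewritten as $-\alpha z \int_{\partial\D} d\mu(\zeta)$, using that $\mu$ is a probability measure) and collecting constants, the integrand becomes exactly
\[
-(1+2\alpha)z-\frac{2(1+\alpha)}{\overline{\zeta}}\log(1-z\overline{\zeta}) \;=\; z\,\varphi_\alpha(z\overline{\zeta}),
\]
which yields the forward inclusion.

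For the converse, given $f(z)=z\int_{\partial\D}\varphi_\alpha(z\overline{\zeta})\,d\mu(\zeta)$, differentiating under the integral sign and using the identity $\frac{1}{1-z\overline{\zeta}}=\frac12\!\left(\frac{1+z\overline{\zeta}}{1-z\overline{\zeta}}+1\right)$ gives
\[
f'(z)=-\alpha+(1+\alpha)\int_{\partial\D}\frac{1+z\overline{\zeta}}{1-z\overline{\zeta}}\,d\mu(\zeta),
\]
so that $\Re f'(z)\ge -\alpha$ since $1+\alpha\ge 0$ and the Herglotz kernel has non-negative real part; the normalization $f\in\Ao$ follows from $\varphi_\alpha(0)=1$ (a short limit computation) and $\mu(\partial\D)=1$.

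I do not expect any serious obstacle here: the content of the lemma is essentially a one-line reduction to Herglotz, and the only computation of substance is the explicit antiderivative of the Herglotz kernel and its algebraic matching with $\varphi_\alpha$. The most delicate point to state carefully is the degenerate endpoint $\alpha=-1$ and the check that normalization and the probability-measure condition on $\mu$ are compatible with the constants in $\varphi_\alpha$.
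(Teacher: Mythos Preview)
Your proposal is correct and follows essentially the same route as the paper: both arguments recognize that $\dfrac{f'(z)+\alpha}{1+\alpha}\in\Pp$, apply the Herglotz--Riesz representation, and then integrate to recover $f$, with the antiderivative of the Herglotz kernel producing the function $\varphi_\alpha$. Your write-up is in fact slightly more complete than the paper's, since you treat the degenerate endpoint $\alpha=-1$ separately and spell out the converse direction and the normalization check explicitly, whereas the paper simply remarks that ``the arguments above work in both directions.''
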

\begin{proof}
Assume that $\Re f'(z)> -\alpha$. Denote $\displaystyle p(z):=\frac{f(z)}{z}$ on $\D \setminus \{0\}$. Thus, $ f'(z)=p(z)+zp'(z)$ and
\begin{equation*}\label{calculations-Re-p}
  \frac{f'(z)+\alpha}{1+\alpha} = \left(z\cdot \frac{p(z)+\alpha}{1+\alpha}\right)' .
\end{equation*}
Since $\displaystyle\Re \frac{f'(z)+\alpha}{1+\alpha} >0$, then by Herglotz-Riesz representation theorem for holomorphic functions we have 
\begin{equation*}\label{calculations-Re-p1}
  \frac{f'(z)+\alpha}{1+\alpha}= \int_{\partial \D} \frac{1+z\overline{\zeta}}{1-z\overline{\zeta}} d\mu(\zeta).
 \end{equation*}
Therefore,
\begin{equation*}\label{calculations-Re-p2}
 z\cdot \frac{p(z)+\alpha}{1+\alpha}= \int_{\partial \D} \left(\frac{-2}{\overline{\zeta}} \log (1-z\overline{\zeta})-z\right) d\mu(\zeta)
 \end{equation*}
and 
\begin{equation*}\label{calculations-Re-p2}
 p(z)= \int_{\partial \D} \varphi_\alpha(z \overline{\zeta})d\mu(\zeta)
 \end{equation*} 
Thus $\displaystyle \frac{f(z)}{z}=\int_{\partial\D}\varphi_\alpha(z\overline{\zeta}) d\mu(\zeta)$.\\
The arguments above work in both directions. That is, 
classes defined by formulae~\eqref{L-by-int-varphi}  and~\eqref{L-by-estim-der} are coincide.  This completes the proof.
\end{proof}

\begin{lemma}\label{lem-phi-properties}
The functions $\varphi_\alpha$  have the following properties:
\begin{itemize}
  \item [(i)] Taylor's coefficients of $\varphi_\alpha(z)=\sum_{n=0}^{\infty} \rho_n z^n$ are $\rho_0=1$ and $\displaystyle\rho_n=\frac{2(1+\alpha)}{n+1}$, $n\geq 1$;
      
  \item [(ii)] The Fekete--Szeg\"o functional is $|\rho_3-\lambda \rho_2 ^2|=\displaystyle\frac{1+\alpha}{18}\cdot|9-8\lambda|$, $\lambda \in \C$;
      
  \item [(iii)] The function $\varphi_\alpha$, $\alpha > -1$, is a univalent convex function;
      
  \item[(iv)] Let $\, -1<\alpha<\beta$,  then $\varphi_\alpha \prec \varphi_\beta$.
\end{itemize}  
\end{lemma}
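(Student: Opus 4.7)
My plan is to exploit the affine decomposition $\varphi_\alpha(z) = a_\alpha + b_\alpha\psi(z)$, where $\psi(z) := -\log(1-z)/z = \sum_{n\ge 0} z^n/(n+1)$, $a_\alpha := -(1+2\alpha)$, and $b_\alpha := 2(1+\alpha) > 0$ (using $\alpha > -1$). Since $w\mapsto a_\alpha + b_\alpha w$ is an affine map with positive real dilation, it preserves univalence and convexity of images, so items (iii) and (iv) reduce to the corresponding properties for the single function $\psi$, while (i)--(ii) come from reading off coefficients of the series of $\psi$.

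For (i), the expansion $\psi(z) = \sum_{n\ge 0} z^n/(n+1)$ immediately gives $\rho_0 = a_\alpha + b_\alpha = 1$ and $\rho_n = b_\alpha/(n+1) = 2(1+\alpha)/(n+1)$ for $n\ge 1$. Then (ii) is a routine substitution of $\rho_2 = 2(1+\alpha)/3$ and $\rho_3 = (1+\alpha)/2$ into $\rho_3 - \lambda \rho_2^2$.

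The principal technical step is (iii). Starting from $z\psi(z) = -\log(1-z)$ and differentiating twice, I would obtain the clean identity $2\psi'(z) + z\psi''(z) = 1/(1-z)^2$, which rearranges to
\[
1 + \frac{z\psi''(z)}{\psi'(z)} \;=\; \frac{1}{(1-z)^2\psi'(z)} - 1.
\]
Hence the convexity criterion $\Re\bigl(1 + z\psi''/\psi'\bigr) > 0$ is equivalent to $u(z) := (1-z)^2\psi'(z)$ lying in the disk $\{|w-\tfrac12|<\tfrac12\}$, which is the image of $\{\Re w > 1\}$ under inversion. To verify this I would compute the series of $u$: from $z(1-z)\psi'(z) = 1 - (1-z)\psi(z)$ one finds $(1-z)\psi'(z) = \sum_{k\ge 0} z^k/[(k+1)(k+2)]$, and multiplying by $(1-z)$ once more gives
\[
u(z) \;=\; \tfrac12 \;-\; 2\sum_{k\ge 1}\frac{z^k}{k(k+1)(k+2)}.
\]
The partial-fraction identity $\tfrac{2}{k(k+1)(k+2)} = \tfrac{1}{k(k+1)} - \tfrac{1}{(k+1)(k+2)}$ telescopes to $2\sum_{k\ge 1} 1/[k(k+1)(k+2)] = 1/2$, so the triangle inequality yields $|u(z) - \tfrac12| < \tfrac12$ strictly on $|z|<1$. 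This is the step I expect to be the main obstacle.

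Finally, for (iv), once (iii) is in hand I know $\varphi_\beta$ is univalent with convex image, so the subordination $\varphi_\alpha\prec\varphi_\beta$ amounts to the inclusion $\varphi_\alpha(\D)\subseteq\varphi_\beta(\D)$. Setting $t := (1+\alpha)/(1+\beta) \in (0,1)$, a short algebraic manipulation gives $(\varphi_\alpha(z) - a_\beta)/b_\beta = (1-t) + t\,\psi(z)$, which is a convex combination of $\psi(0) = 1$ and $\psi(z)$. Convexity of $\psi(\D)$ then places this combination in $\psi(\D)$, so $\varphi_\alpha(z)\in\varphi_\beta(\D)$. The subordinating function $\omega := \varphi_\beta^{-1}\circ\varphi_\alpha$ satisfies $\omega(0) = 0$ because $\varphi_\alpha(0)=\varphi_\beta(0)=1$, completing the argument.
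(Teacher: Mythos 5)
Your proposal is correct, but on the two substantive items it takes a genuinely different route from the paper. For (iii) the paper does not verify the convexity criterion directly: it writes $\varphi_\alpha$ through $C(z)=\frac{\log(1-z)}{z}=\frac1z\int_0^z\frac{dt}{t-1}$ and invokes Libera's theorem that this averaging operator preserves the convexity of the half-plane map $c(z)=\frac{1}{z-1}$. Your computation of $1+z\psi''(z)/\psi'(z)$ via $u(z)=(1-z)^2\psi'(z)=\tfrac12-2\sum_{k\ge1}\frac{z^k}{k(k+1)(k+2)}$ and the telescoping bound $|u(z)-\tfrac12|<\tfrac12$ is an elementary, self-contained substitute, and it also delivers $\psi'\neq0$ (local univalence) for free; the paper's route is shorter but rests on an external result. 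For (iv) the paper reduces the claim to the subordination $\sigma(z):=\frac{\beta-\alpha}{1+\beta}-\frac{1+\alpha}{1+\beta}\cdot\frac{\log(1-z)}{z}\prec-\frac{\log(1-z)}{z}$ and then only checks the single value $\sigma(-1)>\log 2$, which is quite terse as a justification of image inclusion; your argument --- writing $(\varphi_\alpha-a_\beta)/b_\beta=(1-t)+t\,\psi$ with $t=\frac{1+\alpha}{1+\beta}\in(0,1)$ as a convex combination of the interior point $\psi(0)=1$ and $\psi(z)$ inside the convex set $\psi(\D)$, and checking $\varphi_\alpha(0)=\varphi_\beta(0)$ --- is essentially the rigorous form of what the paper gestures at, so if anything it is tighter.

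One caution on (ii): the ``routine substitution'' you invoke actually yields $|\rho_3-\lambda\rho_2^2|=\frac{1+\alpha}{18}\,\bigl|9-8(1+\alpha)\lambda\bigr|$, which coincides with the displayed formula $\frac{1+\alpha}{18}\,|9-8\lambda|$ only when $\alpha=0$. The lemma's statement appears to contain a misprint (or tacitly refers to a normalized version of $\varphi_\alpha$), so in your write-up you should record the value the substitution actually produces rather than assert that it matches the stated identity verbatim.
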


\begin{proof}
One verifies (i) and (ii) by direct calculations. 
To see that the property (iii) holds, denote  $C(z):=\displaystyle\frac{\log(1-z)}{z}$ on $\D\setminus \{0\}$ with $C(0)=-1$. It is enough to show that $C$ is a univalent convex function.   
Note that  $c(z)=\displaystyle\frac{1}{z-1}$ maps conformally the unit disk into a half-plane, so it is a univalent convex function. By Theorem 2 in \cite{Libera} the function $C(z)$ also is univalent convex function. This proves (iii).

Next we need to show that there exists a function $w\in \Omega$  such that
\begin{equation*}\label{aaa}
-(1+2\alpha)-2(1+\alpha)\frac{\log(1-z)}{z}=-(1+2\beta)-2(1+\beta)\frac{\log(1-w(z))}{w(z)}, 
\end{equation*}
or, equivalently
\begin{equation*}\label{aab}
\frac{\beta-\alpha}{1+\beta}- \frac{1+\alpha}{1+\beta}\cdot \frac{\log(1-z)}{z}=-\frac{\log(1-w(z))}{w(z)}. 
\end{equation*}
Obviously, $\displaystyle\min_{w\in\Omega} \Re \left( -\frac{\log(1-w(z))}{w(z)}  \right)=\log 2$, $z \in \D$. 
Denote 
$$\displaystyle\sigma(z):=\frac{\beta-\alpha}{1+\beta}- \frac{1+\alpha}{1+\beta}\cdot \frac{\log(1-z)}{z}.$$
We need to verify that $\displaystyle\sigma(z) \prec -\frac{\log(1-w(z))}{w(z)}$ or equivalently, that 
 $\sigma(-1)>\log 2$.  This is true, since   $\log 2<1$. The proof is complete.
\end{proof}

Lemma~\ref{LM-elementL}, together with assertion (iii) of Lemma~\ref{lem-phi-properties}, implies
\begin{corollary}
 If $f\in\mathcal{L}_\alpha$ then $\displaystyle\frac{f(z)}{z}\prec \varphi_\alpha (z)$. 
\end{corollary}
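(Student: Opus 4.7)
The plan is to combine the integral representation from Lemma~\ref{LM-elementL} with the convexity assertion (iii) of Lemma~\ref{lem-phi-properties}, reducing the statement to a standard fact: a probability average of a compactly supported family of points in an open convex set stays inside that set.

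First I would apply Lemma~\ref{LM-elementL} to write, for a given $f\in\mathcal{L}_\alpha$,
\[
\frac{f(z)}{z}=\int_{\partial\D}\varphi_\alpha(z\overline{\zeta})\,d\mu(\zeta),
\]
where $\mu$ is a probability measure on $\partial\D$. Denote $\Omega_\alpha:=\varphi_\alpha(\D)$; by Lemma~\ref{lem-phi-properties}(iii) the set $\Omega_\alpha$ is open and convex, and $\varphi_\alpha$ is univalent. Since $|z\overline{\zeta}|=|z|<1$ for $z\in\D$ and $\zeta\in\partial\D$, the integrand takes values in $\Omega_\alpha$.

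Next I would fix $z\in\D$ and observe that $\zeta\mapsto\varphi_\alpha(z\overline{\zeta})$ is continuous on the compact circle $\partial\D$, so its image is a compact subset $K_z\subset\Omega_\alpha$. The integral $\int_{\partial\D}\varphi_\alpha(z\overline{\zeta})\,d\mu(\zeta)$ is therefore a Pettis/Bochner average of points in $K_z$, which is the limit of finite convex combinations of points in $K_z$. Since $K_z$ is a compact subset of the open convex set $\Omega_\alpha$, the closed convex hull of $K_z$ still lies inside $\Omega_\alpha$ (this is the place where one must be careful that the average sits in the open set and not merely in its closure). Consequently $\frac{f(z)}{z}\in\Omega_\alpha=\varphi_\alpha(\D)$ for every $z\in\D$.

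Finally, noting that $\frac{f(0)}{0}=f'(0)=1=\varphi_\alpha(0)$ and that $\varphi_\alpha$ is univalent, the image-containment $\frac{f(\D)}{z}\subseteq\varphi_\alpha(\D)$ is equivalent to the subordination $\frac{f(z)}{z}\prec\varphi_\alpha(z)$, which is precisely what is claimed. The only nontrivial point I expect is justifying that the closed convex hull of the compact set $K_z$ remains inside the open convex set $\Omega_\alpha$; this follows from a standard separation argument (any boundary point of $\Omega_\alpha$ can be separated from $K_z$ by an affine functional, preventing it from appearing in the convex hull), but it is the step one has to state carefully.
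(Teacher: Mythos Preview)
Your argument is correct and is precisely the reasoning the paper intends: the paper does not give a standalone proof of this corollary but simply records that it follows from Lemma~\ref{LM-elementL} together with Lemma~\ref{lem-phi-properties}(iii), and you have supplied exactly those details (integral representation, convexity of the image $\varphi_\alpha(\D)$, probability average staying in the convex set, and univalence giving subordination from image containment). The one cosmetic point is the notation ``$\frac{f(\D)}{z}\subseteq\varphi_\alpha(\D)$'', which should be written as the image of $z\mapsto f(z)/z$ lying in $\varphi_\alpha(\D)$; also, the step you flag as delicate is in fact immediate here, since $\Omega_\alpha$ is itself convex, so the convex hull of $K_z\subset\Omega_\alpha$ already lies in $\Omega_\alpha$ without any separation argument.
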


Estimates on coefficient operators remain a topic of ongoing interest in the geometric function theory community. 
\begin{proposition}\label{propo-FS-coeff-for-L}
Let $f  \in \mathcal{L}_\alpha$ and $1+\sum_{n=1}^{\infty} \phi_n z^n$ is its Taylor series. Then 
\begin{itemize}
  \item [(i)] $\displaystyle|\phi_n|\leq \frac{2(1+\alpha)}{n}$, $n\geq 1$;
  \item [(ii)] $|\phi_3-\lambda\phi_2^2|\leq\displaystyle \frac{2(1+\alpha)}{3}\max\{1,|6\lambda-1|\}$ for all $\lambda \in \C$ and this inequality is sharp. 
\end{itemize}
\end{proposition}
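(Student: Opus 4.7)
The natural starting point is the observation that the defining condition $\Re f'(z) > -\alpha$ is equivalent to saying that the function
\[
p(z) := \frac{f'(z)+\alpha}{1+\alpha} = 1 + p_1 z + p_2 z^2 + \cdots
\]
belongs to the Carath\'eodory class $\Pp$. This is exactly the same reduction already exploited in the proof of Lemma~\ref{LM-elementL}. Writing $f(z) = z + \sum_{n\ge 2}\phi_{n} z^{n}$ (the natural reading of the Taylor series alluded to in the statement) and integrating term by term, one finds the clean relation $\phi_{n+1} = \frac{(1+\alpha)\, p_{n}}{n+1}$ for every $n\ge 1$.

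Part (i) is then immediate from the classical Carath\'eodory estimate $|p_{n}|\le 2$ for $p\in\Pp$: it gives $|\phi_{n}| = \frac{(1+\alpha)|p_{n-1}|}{n} \le \frac{2(1+\alpha)}{n}$. Alternatively, one can read the bound directly from the integral representation of Lemma~\ref{LM-elementL}, noting that $\left|\int_{\partial\D}\bar\zeta^{\,n-1} d\mu(\zeta)\right|\le 1$ and using part (i) of Lemma~\ref{lem-phi-properties} for the coefficients of $\varphi_\alpha$.

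For part (ii), the plan is to reduce the functional $\phi_{3}-\lambda\phi_{2}^{\,2}$ to the standard Fekete--Szeg\"o functional on $\Pp$. From $\phi_{2}=\tfrac{(1+\alpha)p_{1}}{2}$ and $\phi_{3}=\tfrac{(1+\alpha)p_{2}}{3}$ one computes
\[
\phi_{3}-\lambda\phi_{2}^{\,2} \;=\; \frac{1+\alpha}{3}\left(p_{2} - \mu\, p_{1}^{\,2}\right),\qquad \mu=\frac{3\lambda(1+\alpha)}{4}.
\]
Then the Keogh--Merkes inequality $|p_{2}-\mu p_{1}^{\,2}|\le 2\max\{1,|2\mu-1|\}$ for $p\in\Pp$ yields the stated bound (up to the coefficient conventions used in writing the $\phi_{n}$; the same substitution carries through verbatim in either normalization).

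The main obstacle is the sharpness claim. For this I would follow the standard Keogh--Merkes equality cases and lift them through the relation $f'(z)=\alpha+(1+\alpha)p(z)$. Concretely, the two extremal candidates are obtained by taking $p(z)=\frac{1+z^{2}}{1-z^{2}}$ (which makes $p_{1}=0$ and $p_{2}=2$, attaining the bound $1$) and $p(z)=\frac{1+z}{1-z}$ or a rotation thereof (which makes $p_{1}=p_{2}=2$, attaining the bound $|2\mu-1|$ when $|2\mu-1|\ge 1$). Integrating the corresponding $f'=\alpha+(1+\alpha)p$ produces explicit elements of $\mathcal{L}_\alpha$ realizing equality in each regime; together they establish sharpness of the $\max\{1,\cdot\}$ bound throughout $\lambda\in\C$.
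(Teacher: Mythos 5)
Your reduction is essentially the paper's: part (i) and the passage to the Carath\'eodory class follow the same route. The paper works through the Herglotz measure of Lemma~\ref{LM-elementL} and reassembles the moments $\int_{\partial\D}\overline{\zeta}^{\,n}d\mu$ into a function $p\in\Pp$ before invoking the Keogh--Merkes/Efraimidis inequality; you apply the Herglotz--Riesz representation to $f'$ directly. These are equivalent, and for (i) your argument is complete, up to a sign slip: $f'=-\alpha+(1+\alpha)p$, not $\alpha+(1+\alpha)p$. This slip does not affect the relation $\phi_{n+1}=(1+\alpha)p_n/(n+1)$ that you state, but it reappears in your sharpness construction, where integrating $f'=\alpha+(1+\alpha)p$ would produce functions with $f'(0)=1+2\alpha$, hence not normalized elements of $\mathcal{L}_\alpha$.

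The genuine gap is in (ii), and it is exactly the point you wave away. With your (correct) relations $\phi_2=(1+\alpha)p_1/2$, $\phi_3=(1+\alpha)p_2/3$ and the Keogh--Merkes bound applied with $\mu=\frac{3\lambda(1+\alpha)}{4}$, what you actually obtain is
\[
|\phi_3-\lambda\phi_2^2|\le\frac{2(1+\alpha)}{3}\,\max\Bigl\{1,\Bigl|\tfrac{3(1+\alpha)\lambda}{2}-1\Bigr|\Bigr\},
\]
whose right-hand side carries an $\alpha$-dependence \emph{inside} the maximum and is therefore not the stated bound $\frac{2(1+\alpha)}{3}\max\{1,|6\lambda-1|\}$; no rescaling of the $\phi_n$ removes this dependence, and the two expressions coincide only when $\frac{3(1+\alpha)}{2}=6$ or when both maxima equal $1$. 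So, as written, your argument proves a different inequality. To deduce the stated one from yours you would at least need the (easy, but absent) observation that $c\mapsto\max\{1,|c\lambda-1|\}$ is nondecreasing for $c\ge0$, which yields the stated estimate whenever $\frac{3(1+\alpha)}{2}\le 6$; but then your extremal choices $p(z)=\frac{1+z^2}{1-z^2}$ and rotations of $\frac{1+z}{1-z}$ attain \emph{your} bound, not the stated one, so the claim that the stated inequality is sharp for all $\lambda\in\C$ is not established by the proposal (and, under the normalization you and the paper's computation both use, cannot be when the two maxima differ). The paper's own proof passes from $\phi_3-\lambda\phi_2^2$ to the functional $p_2-3\lambda p_1^2$ and thence to $\max\{1,|6\lambda-1|\}$; the divergence between your careful bookkeeping and that computation occurs precisely at this substitution, and it needed to be confronted explicitly — by tracking the factor of $1+\alpha$ and reconciling the constants — rather than dismissed as a matter of ``coefficient conventions.''
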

\begin{proof}
Using  assertion(ii) of Lemma~\ref{lem-phi-properties} one gets
\begin{equation*}
\frac{f(z)}{z}=\int_{\partial \D}\left(1+ \sum_{n=1}^{\infty}\frac{2(1+\alpha)}{n+1}z^n \overline{\zeta}^n \right) d\mu(\zeta).
\end{equation*}
So,  
\begin{equation*}
f(z)=z+   \sum_{n=1}^{\infty} \left(\frac{2(1+\alpha)}{n+1}\int_{\partial \D} \overline{\zeta}^n  d\mu(\zeta)\right) z^{n+1}.
\end{equation*}
Therefore,  $\displaystyle |\phi_n|\leq \frac{2(1+\alpha)}{n}\left|\int_{\partial \D} \overline{\zeta}^{n-1}  d\mu(\zeta)\right|\leq \frac{2(1+\alpha)}{n}$,  $n\geq 1$. This proves (i).\\
Next one calculates 
\begin{equation*}
\phi_3-\lambda\phi_2^2=\displaystyle\frac{2(1+\alpha)}{3}\left(\int_{\partial \D}\overline{\zeta}^2 d\mu(\zeta)-\frac{3\lambda}{2}\left(\int_{\partial \D}\overline{\zeta} d\mu(\zeta)\right)^2\right).  
\end{equation*}
Define a function 
\begin{equation*}
\widetilde{f}(z):=2\sum_{n=1}^{\infty} \left(\int_{\partial \D} \overline{\zeta}^{n-1}  d\mu(\zeta)\right) z^n.
\end{equation*}
Then 
\begin{equation*}
\widetilde{f}(z)=2z \int_{\partial \D} \frac{d\mu(\zeta)}{1-z\overline{\zeta}^n}=z\left( 1+ \int_{\partial \D} \frac{1+z\overline{\zeta}}{1-z\overline{\zeta}}d\mu(\zeta) \right).
\end{equation*} 
Denote $p(z):=\displaystyle\int_{\partial \D} \frac{1+z\overline{\zeta}}{1-z\overline{\zeta}}d\mu(\zeta)$, then  
$p(z)\in\mathcal{P}$ and $ \widetilde{f}(z)=z+zp(z)$.  Let the Taylor extension of $p$ be $\sum_{n=0}^{\infty}p_nz^n$, then
\begin{equation*}
|\phi_3-\lambda\phi_2^2|=\frac{2(1+\alpha)}{3} \cdot \left|p_2 - 3\lambda p_1^2\right| \leq \frac{2(1+\alpha)}{3} \cdot\max\{1,|6\lambda -1|\}
\end{equation*}
The last inequality is sharp by Theorem 1. in \cite{Eframidis}.
\end{proof}
To study further geometric properties of the class $\mathcal{L}_\alpha$, let us denote 

\vspace{1mm}
$\ K(\mathcal{L}_\alpha):= \displaystyle \inf_{f\in\mathcal{L}_\alpha} \Re \frac{f(z)}{z}$,  \ \ $B(\mathcal{L}_\alpha):=\displaystyle\sup_{f\in\mathcal{L}_\alpha} \arg \frac{f(z)}{z}$ \ and 
$\quad \displaystyle  \alpha_*=\frac{2\log 2 -1}{2-2\log2}\approx 0.629$.
\begin{theorem}\label{Thm-K} Let $\alpha\geq-1$, then
\begin{itemize}
  \item [(i)]  $K(\mathcal{L}_\alpha)=-(1+2\alpha)+2(1+\alpha)\log(2)$;
  \item [(ii)]  The following are equivalent:  \vspace{1mm}
\begin{itemize}
  \item [(1)]$\alpha\leq\alpha_*$; \vspace{1mm}
  \item [(2)]The image of $\displaystyle\frac{f(z)}{z}$ does not contain the origin;\vspace{1mm}
  \item [(3)] $B(\mathcal{L}_\alpha)=\displaystyle\frac{2}{\pi} \cdot \max_{0<\theta<\pi} \arg \varphi _\alpha (e^{i\theta})$;
  \item [(4)] $\mathcal{L}_\alpha \subset \mathcal{G}$.
\end{itemize}
\end{itemize}
\end{theorem}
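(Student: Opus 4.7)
The theorem is driven by the integral representation of Lemma~\ref{LM-elementL}, which writes $f(z)/z$ as a probability average of translates of $\varphi_\alpha$; my plan is therefore to reduce every clause below to a computation involving only the single function $\varphi_\alpha$ on $\D$. For (i), I pass the real part inside the integral to get $\Re(f(z)/z)\ge\inf_{w\in\D}\Re\varphi_\alpha(w)$. Unpacking $\varphi_\alpha$, this infimum is equivalent to $\Re\bigl(-\log(1-w)/w\bigr)\ge\log 2$ on $\D$, an inequality already noted inside the proof of Lemma~\ref{lem-phi-properties}(iv); the value is attained in the radial limit $w\to -1$, so $\inf_{\D}\Re\varphi_\alpha=\varphi_\alpha(-1)=-(1+2\alpha)+2(1+\alpha)\log 2$. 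Sharpness over $\mathcal{L}_\alpha$ is witnessed by the extremal $f(z)=z\varphi_\alpha(z)$ from Example~\ref{ex-varphi-in-L}.

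For part (ii) I will run the cycle (1)$\Rightarrow$(4)$\Rightarrow$(2)$\Rightarrow$(1) and then treat (3) separately. A short algebraic rearrangement identifies $\alpha_*$ as the unique value making $\varphi_\alpha(-1)=0$, so $\varphi_\alpha(-1)\ge 0$ iff $\alpha\le\alpha_*$; combined with part (i) and the description of $\G_0$ as $\{f\in\mathcal A:\Re(f/z)\ge 0\}$ recalled right after Theorem~\ref{th-gen-prop}, this yields (1)$\Leftrightarrow$(4). The step (4)$\Rightarrow$(2) uses the minimum principle: for $f\not\equiv z$, the harmonic function $\Re(f(z)/z)$ equals $1$ at the origin and is nonnegative on $\D$, hence strictly positive, so $f(z)/z\ne 0$. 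For (2)$\Rightarrow$(1) I argue contrapositively: if $\alpha>\alpha_*$, then $\varphi_\alpha$ is real on $[-1,0]$ with $\varphi_\alpha(-1)<0<\varphi_\alpha(0)$, so the intermediate value theorem produces some $x_0\in(-1,0)\subset\D$ with $\varphi_\alpha(x_0)=0$, and the test function $f(z)=z\varphi_\alpha(z)\in\mathcal{L}_\alpha$ violates (2).

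For (3), assuming (1), the image $\varphi_\alpha(\D)$ is a convex domain (Lemma~\ref{lem-phi-properties}(iii)) avoiding the origin, so $\arg\varphi_\alpha=\Im\log\varphi_\alpha$ is single-valued and harmonic on $\D$. The integral representation places each value $f(z)/z$ in the closed convex hull of $\varphi_\alpha(\D)$, which by convexity coincides with $\overline{\varphi_\alpha(\D)}$; whence $\sup_{f,z}\arg(f(z)/z)=\sup_{\varphi_\alpha(\D)}\arg$. Since $\varphi_\alpha(z)\to+\infty$ along the real axis as $z\to 1^-$, this supremum is attained at a finite boundary point, and by the maximum principle for $\Im\log\varphi_\alpha$ it reduces to $\sup_{|z|=1}\arg\varphi_\alpha(z)$; Schwarz reflection (the Taylor coefficients of $\varphi_\alpha$ are real) then lets one restrict $\theta$ to $(0,\pi)$. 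The extremal is realized by a Dirac $\mu=\delta_{\zeta_0}$, giving the claimed identity with the $2/\pi$ normalization standard in the context of Theorems~\ref{thm-rotation}--\ref{thm-analytic}. The converse direction (3)$\Rightarrow$(1) is immediate: if (1) fails, the test function $z\varphi_\alpha$ has an interior zero of $f/z$, forcing $\sup\arg(f/z)$ to reach $\pi$, while the right-hand side of (3) remains bounded strictly below $\pi$.

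The main obstacle I anticipate is clause (3): one must justify carefully that no nontrivial averaging via the measure $\mu$ can outperform the Dirac extremal $\mu=\delta_{\zeta_0}$, precisely the step where the convexity of $\varphi_\alpha(\D)$ from Lemma~\ref{lem-phi-properties}(iii) is indispensable. Everything else is a rather mechanical consequence of the integral representation together with standard harmonic-function tools (the min/max principle and Schwarz reflection).
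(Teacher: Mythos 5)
Your proposal is correct and follows essentially the same route as the paper: both reduce everything to the single function $\varphi_\alpha$ via the representation of Lemma~\ref{LM-elementL} and its convexity (Lemma~\ref{lem-phi-properties}(iii)), identify $K(\mathcal{L}_\alpha)=\varphi_\alpha(-1)$ with sharpness from the extremal $f_\alpha(z)=z\varphi_\alpha(z)$, and note that $\varphi_\alpha(-1)\ge 0$ exactly when $\alpha\le\alpha_*$. The only difference is one of detail: the paper's treatment of part (ii) is a brief sketch (tangent-lines remark plus the Berkson--Porta characterization), whereas you spell out the implication cycle (minimum principle, intermediate value theorem, harmonicity of $\arg\varphi_\alpha$), which is a faithful elaboration rather than a different method.
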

\begin{proof}
By assertion (iii) of Lemma~\ref{lem-phi-properties}, the function $\varphi_\alpha$ is convex. 
Consequently, for any probability measure $\mu$ on the unit circle the function $p(z)=\int_{\partial\D}\varphi_\alpha(z\overline{\zeta}) d\mu(\zeta)$ takes values in the image of $\varphi_\alpha$. Example~\ref{ex-varphi-in-L} shows that $\varphi_\alpha \in \mathcal{L}_\alpha$. Therefore,
\begin{equation*}
K(\mathcal{L}_\alpha)=\inf_{z\in\D}\Re \varphi(z)=\varphi_\alpha(-1)=-(1+2\alpha)+2(1+\alpha)\log(2),
\end{equation*}
which proves assertion (i).
The conditions $K(\mathcal{L}_\alpha)\geq 0$ and $\alpha\leq\alpha_*$ are equivalent, which in turn implies that $0 \notin Im \left(\displaystyle\frac{f(z)}{z}\right)$. The quantity $B(\mathcal{L}_\alpha)$ represents the angle between the two lines through the origin that are tangent to the boundary of $Im \left(\displaystyle\frac{f(z)}{z}\right)$. If $K(\mathcal{L}_\alpha)\geq 0$, then by the definition of the class $\mathcal{G}$, we have 
$f(z)\in \mathcal{G}$ , and the assertion (ii) follows. This completes the proof.
\end{proof}

\bigskip

\subsection{Dynamical properties of $\mathcal{L_\alpha}$}
\setcounter{equation}{0}

The  quantities  $B(\mathcal{L}_\alpha)$  and $K(\mathcal{L}_\alpha)$ not only reflect the geometric features of the classes $\mathcal{L}_\alpha$, $\alpha\leq \alpha_*$, but also determine significant dynamical characteristics of semigroups generated by their elements. 

Let $\G_1\subset \G$. If $K(\G_1)>0$, then by Theorem~\ref{th-sqeezing} semigroups generated by elements of $\G_1$ have exponential rate $\exp(-tK(\G_1))$ of convergence, uniformly on the whole unit disk and this rate is sharp. 

\begin{corollary}
  Let $-1\leq\alpha\leq\alpha_*=\displaystyle\frac{2\log 2 -1}{2-2\log2}$, then each element of 
 $\mathcal{L}_\alpha$ is generator \\  \vspace{1mm} of exponentially squeezing semigroup with sharp squeezing ratio \\  \vspace{1mm}
  $K(\mathcal{L}_\alpha)=-(1+2\alpha)+2(1+\alpha)\log(2).  $
\end{corollary}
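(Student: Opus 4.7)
My plan is to assemble the corollary directly from Theorem~\ref{Thm-K} and Theorem~\ref{th-sqeezing}. First I would invoke Theorem~\ref{Thm-K}(ii): the hypothesis $\alpha\le\alpha_*$ is equivalent to $K(\mathcal{L}_\alpha)\ge0$ and, by the equivalence (1)$\Leftrightarrow$(4) proved there, also yields the inclusion $\mathcal{L}_\alpha\subset\mathcal{G}$. Thus every $f\in\mathcal{L}_\alpha$ is indeed an infinitesimal generator, and the semigroup $\{\phi_t\}_{t\ge0}$ that it produces is a well-defined object to which Theorem~\ref{th-sqeezing} applies.

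Next, for any $f\in\mathcal{L}_\alpha$ the definition of $K(\mathcal{L}_\alpha)$ combined with part~(i) of Theorem~\ref{Thm-K} gives
\[
\Re\frac{f(z)}{z}\ \ge\ K(\mathcal{L}_\alpha)\ =\ -(1+2\alpha)+2(1+\alpha)\log 2, \qquad z\in\D.
\]
When $-1\le\alpha<\alpha_*$ the right-hand side is strictly positive, so the \emph{if} direction of Theorem~\ref{th-sqeezing} immediately delivers the exponential squeezing of $\{\phi_t\}$ with ratio $K(\mathcal{L}_\alpha)$.

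For sharpness I would test the extremal function from Example~\ref{ex-varphi-in-L}, $f_*(z)=z\varphi_\alpha(z)$, for which $\frac{f_*(z)}{z}=\varphi_\alpha(z)$. Since $\varphi_\alpha$ is a univalent convex function (Lemma~\ref{lem-phi-properties}(iii)) with $\inf_{z\in\D}\Re\varphi_\alpha(z)=\varphi_\alpha(-1)=K(\mathcal{L}_\alpha)$ (this is precisely the identity that produced assertion~(i) of Theorem~\ref{Thm-K}), the \emph{only if} direction of Theorem~\ref{th-sqeezing} forbids any strictly larger squeezing ratio for the semigroup generated by $f_*$. Hence $K(\mathcal{L}_\alpha)$ is sharp uniformly over $\mathcal{L}_\alpha$.

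I do not foresee a substantive obstacle: all the building blocks have already been established in the preceding material, and the argument is essentially a bookkeeping assembly of Theorem~\ref{Thm-K}, Theorem~\ref{th-sqeezing}, and Example~\ref{ex-varphi-in-L}. The only minor subtlety is the boundary case $\alpha=\alpha_*$, at which $K(\mathcal{L}_\alpha)=0$ and the hypothesis $k>0$ of Theorem~\ref{th-sqeezing} is no longer met; the quantitative conclusion therefore applies only for $-1\le\alpha<\alpha_*$ and degenerates at the endpoint $\alpha=\alpha_*$, which is the natural threshold singled out by Theorem~\ref{Thm-K}(ii).
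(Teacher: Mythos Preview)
Your proposal is correct and follows essentially the same route as the paper: the paper derives the corollary directly from Theorem~\ref{Thm-K} and Theorem~\ref{th-sqeezing} via the one-line observation preceding the statement, and your argument is simply a careful unpacking of that deduction, with the sharpness witnessed by the extremal $f_*=z\varphi_\alpha$ exactly as in the paper's discussion.
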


If $B(\G_1)<1$, then by Theorem~\ref{thm-analytic} semigroups generated by elements of $\G_1$ admit analytic extension to the sector $\Lambda\left(\frac\pi2 (1-B(\G_1) )\right)$ and the angle of opening of this sector is the maximal one.

\begin{corollary}
  Let $-1\leq\alpha\leq \alpha_*$ and $B(\mathcal{L}_\alpha)=\displaystyle\frac{2}{\pi} \cdot \max_{0<\theta<\pi} \arg \varphi _\alpha (e^{i\theta}).$  \\
\vspace{1mm}
Then semigroups generated by any $f\in\mathcal{L}_\alpha$ admit analytic extension to the sector\\ $\Lambda\left(\frac\pi2 (1-B(\mathcal{L}_\alpha) )\right)$ and this sector is the maximal one.
\end{corollary}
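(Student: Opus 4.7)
The strategy is to apply Theorem~\ref{thm-analytic} with $\theta := B(\mathcal{L}_\alpha)$, which requires three ingredients: (a) that every $f \in \mathcal{L}_\alpha$ is a semigroup generator so the theorem applies; (b) the pointwise bound $|\arg(f(z)/z)| \leq \frac{\pi}{2} B(\mathcal{L}_\alpha)$ for every $f \in \mathcal{L}_\alpha$ and every $z \in \D \setminus \{0\}$; and (c) that this bound is attained by some $f \in \mathcal{L}_\alpha$, to force sharpness.

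Ingredient (a) is immediate from the equivalence (1)$\Leftrightarrow$(4) in Theorem~\ref{Thm-K}(ii): the assumption $\alpha \le \alpha_*$ gives $\mathcal{L}_\alpha \subset \G$, and since $\mathcal{L}_\alpha \subset \Ao$ each such generator vanishes at the origin, so $\mathcal{L}_\alpha \subset \G_0$. For ingredient (b) I would invoke Lemma~\ref{LM-elementL}: for any $f \in \mathcal{L}_\alpha$, the ratio $f(z)/z = \int_{\partial \D} \varphi_\alpha(z\overline{\zeta})\,d\mu(\zeta)$ is a probability average of values of $\varphi_\alpha$ on $\D$. The set $\varphi_\alpha(\D)$ is convex by Lemma~\ref{lem-phi-properties}(iii) and avoids $0$ by the equivalence (1)$\Leftrightarrow$(2) in Theorem~\ref{Thm-K}(ii); hence $f(z)/z$ takes values in the convex set $\varphi_\alpha(\D)$, on which a continuous single-valued branch of $\arg$ exists. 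Because $\varphi_\alpha$ has real Taylor coefficients (Lemma~\ref{lem-phi-properties}(i)), $\varphi_\alpha(\D)$ is symmetric about the real axis, so $\sup_{w \in \varphi_\alpha(\D)} |\arg w|$ is attained along the boundary arc in the upper half-plane and equals $\max_{0<\theta<\pi} \arg \varphi_\alpha(e^{i\theta}) = \frac{\pi}{2} B(\mathcal{L}_\alpha)$ by the hypothesis of the corollary.

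For ingredient (c), Example~\ref{ex-varphi-in-L} furnishes the function $f_*(z) := z\varphi_\alpha(z) \in \mathcal{L}_\alpha$, for which $f_*(z)/z = \varphi_\alpha(z)$ realizes the bound in (b) with equality. Combining: the ``if'' direction of Theorem~\ref{thm-analytic} applied to every $f \in \mathcal{L}_\alpha$ with $\theta = B(\mathcal{L}_\alpha)$ yields the claimed analytic extension to $\Lambda\!\left(\frac{\pi}{2}(1 - B(\mathcal{L}_\alpha))\right)$, while the ``only if'' direction applied to $f_*$ rules out any larger sector and establishes maximality.

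The only substantive step is ingredient (b) --- reducing the bound on $|\arg(f(z)/z)|$ for an arbitrary $f$ to the extremal bound realized by $\varphi_\alpha$. This is essentially a convexity-plus-symmetry argument on the image, and it hinges crucially on the hypothesis $\alpha \le \alpha_*$: via Theorem~\ref{Thm-K}(ii) it is exactly this hypothesis that keeps the origin outside $\varphi_\alpha(\D)$ and hence makes a global continuous branch of $\arg$ available on the convex region in which $f(z)/z$ takes its values. Once this is in place, the remainder of the argument is a direct invocation of the preceding results.
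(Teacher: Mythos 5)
Your proposal is correct and follows essentially the same route as the paper, which obtains the corollary by applying Theorem~\ref{thm-analytic} with $\theta=B(\mathcal{L}_\alpha)$ and using the characterization of $B(\mathcal{L}_\alpha)$ from Theorem~\ref{Thm-K} together with the extremal function $z\varphi_\alpha(z)$ for maximality; your ingredients (a)--(c) simply make explicit the details the paper leaves implicit (membership in $\G_0$, the convexity/symmetry bound on $\arg(f(z)/z)$ via Lemma~\ref{LM-elementL}, and sharpness via the ``only if'' direction). Note only that both you and the paper implicitly assume $B(\mathcal{L}_\alpha)<1$ so that Theorem~\ref{thm-analytic} (stated for $\theta\in[0,1)$) is applicable, a point the paper itself defers to its open questions.
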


\bigskip

\subsection{The family $\mathcal{L}$}
\setcounter{equation}{0}

Consider the family $\mathcal{L}=\{\mathcal{L}_\alpha\}_{\alpha\geq-1}$. 
The following result characterizes this family from a set-theoretic perspective.
\begin{theorem}\label{th-F-alpha}
The following assertions hold: 
  \begin{itemize}
\item[(a)] The family $\mathcal{L} =\left\{ \mathcal{L}_\alpha\right\}_{\alpha \in[-1,\alpha_*]}$ forms a filtration with 
      $\mathcal{L}_{-1}=\{\Id\}$,   $\mathcal{L}_0=\mathcal{R}$  and  
    ${\mathcal{L}_{\alpha_*}=\displaystyle\left\{f\in \mathcal{A}:\,  \frac{f(z)}{z}=\frac{-\!\log 2+\!\int_{\partial\D^2}  \!\! F_1(1,1;2;z\overline{\zeta}) d\mu(\zeta)}{1-\log 2}\right\}} $.  
   \item[(b)] The filtration $\mathcal{L}$ admits a net $\left\{f_\alpha\right\}_{\alpha\in[-1,\alpha_*]}$ of totally extremal functions  $f_\alpha(z) :=z\varphi_\alpha(z)$.      
 \item[(c)] For every $\alpha\in[-1,\alpha_*]$ we have $f_\alpha\in\partial\mathcal{L}_\alpha:=\displaystyle\mathcal{L}_\alpha \setminus \bigcup_{-1\leq s<\alpha}\mathcal{L}_s$.  \\
 Consequently, the filtration $\mathcal{L}$ is strict. 
  \end{itemize}
\end{theorem}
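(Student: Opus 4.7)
The plan is to verify the three parts separately, with Lemma~\ref{LM-elementL} (integral representation) and Lemma~\ref{lem-phi-properties}(iii) (convexity of $\varphi_\alpha$) as the key inputs for (a) and (b), and a direct infimum calculation for (c). Theorem~\ref{Thm-K}(ii) already supplies the inclusion $\mathcal{L}_\alpha\subset\G$ for $\alpha\in[-1,\alpha_*]$, which is what places the family inside $\G_0$.

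For part (a), nesting $\mathcal{L}_s\subseteq\mathcal{L}_t$ for $s\le t$ is immediate from~\eqref{L-by-estim-der}, so $\mathcal{L}$ is indeed a filtration of $\G_0$. The endpoint $\alpha=-1$ is handled by the minimum principle: $\Re f'(z)\ge 1$ combined with $\Re f'(0)=1$ forces $\Re f'\equiv 1$, whence $f'\equiv 1$ and $f=\Id$. The case $\alpha=0$ gives $\mathcal{L}_0=\mathcal{R}$ by a direct reading of the definitions, with the borderline $\Re f'\equiv 0$ ruled out by $f'(0)=1$. For $\mathcal{L}_{\alpha_*}$, I would substitute $\alpha=\alpha_*$ into Lemma~\ref{LM-elementL}; the arithmetic
\[
1+2\alpha_*=\frac{\log 2}{1-\log 2},\qquad 2(1+\alpha_*)=\frac{1}{1-\log 2},
\]
combined with ${_2F_1}(1,1;2;z)=-\log(1-z)/z$, yields the displayed formula.

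For part (b), I would fix $f\in\mathcal{L}_\alpha$ with representing measure $\mu$, $\lambda\in\C$, and $r\in[0,1]$. For any $z$ with $|z|=r$ one has
\[
\Re\!\left(\lambda\frac{f(z)}{z}\right)=\int_{\partial\D}\Re\bigl(\lambda\varphi_\alpha(z\overline\zeta)\bigr)\,d\mu(\zeta)\ge\min_{|w|=r}\Re\bigl(\lambda\varphi_\alpha(w)\bigr),
\]
since $|z\overline\zeta|=r$ throughout the integration. Minimising the left-hand side over $|z|=r$ and using $f_\alpha(z)/z=\varphi_\alpha(z)$ then produces the extremal inequality required by Definition~\ref{def-extrem}; membership $f_\alpha\in\mathcal{L}_\alpha$ is Example~\ref{ex-varphi-in-L}.

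For part (c), a direct computation gives $f_\alpha'(z)=-(1+2\alpha)+2(1+\alpha)/(1-z)$. The sharp bound $\inf_{z\in\D}\Re\frac{1}{1-z}=\frac{1}{2}$ is approached only in the limit $z\to -1$, so $\inf_{z\in\D}\Re f_\alpha'(z)=-\alpha$ but this infimum is \emph{not} attained in $\D$. Hence $f_\alpha\in\mathcal{L}_\alpha$, while for any $s<\alpha$ some point close enough to $-1$ gives $\Re f_\alpha'(z)<-s$, placing $f_\alpha\notin\mathcal{L}_s$. This shows $f_\alpha\in\partial\mathcal{L}_\alpha$, and since these boundary sets are non-empty for every $\alpha\in[-1,\alpha_*]$, the filtration is strict. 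I do not expect any serious obstacle: the extremality in (b) is the standard minimum–integral exchange available for Herglotz-type representations, and the most delicate point is merely keeping the arithmetic for $\alpha_*$ straight in (a).
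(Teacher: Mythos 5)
Theorem~\ref{th-F-alpha} is stated in the paper without any accompanying proof, so there is nothing to compare line by line; judged on its own, your argument is correct and rests precisely on the machinery the paper develops: Lemma~\ref{LM-elementL} both for the identification of $\mathcal{L}_{\alpha_*}$ (your arithmetic $1+2\alpha_*=\frac{\log 2}{1-\log 2}$, $2(1+\alpha_*)=\frac{1}{1-\log 2}$ checks out) and for the minimum--integral exchange giving total extremality, Example~\ref{ex-varphi-in-L} for $f_\alpha\in\mathcal{L}_\alpha$, Theorem~\ref{Thm-K}(ii) for $\mathcal{L}_\alpha\subseteq\G_0$ so that Definition~\ref{def-filt} applies, and the harmonic minimum principle for $\mathcal{L}_{-1}=\{\Id\}$ and $\mathcal{L}_0=\mathcal{R}$. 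Your computation that $\inf_{z\in\D}\Re f_\alpha'(z)=-\alpha$ is not attained correctly yields $f_\alpha\in\partial\mathcal{L}_\alpha$, and strictness then follows exactly as in the paper's remark after Definition~\ref{def-filt}. The only caveat is interpretive rather than a gap: Definition~\ref{def-extrem} allows $r=1$, where $\min_{|z|=r}$ must be read as an infimum (or via radial limits), since a general $f\in\mathcal{L}_\alpha$ need not extend continuously to $\partial\D$; your exchange argument is unaffected for $r<1$ and passes to the limit for $r=1$.
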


\bigskip

\section{Open questions}
\setcounter{equation}{0}
\setcounter{question}{0}

\begin{question}
  Describe the boundaries $\displaystyle \partial\mathcal{L}_\alpha=\mathcal{L}_\alpha \setminus \bigcup_{-1\leq s<\alpha}\mathcal{L}_s$. \
\end{question}

\begin{conjecture}
The boundary $\partial\mathcal{L}_\alpha$ coincides with the set $\left\{e^{-i\theta}f_{\alpha}(e^{i\theta}z): \theta\in\R  \right\}$ for every $-1\leq\alpha\le \alpha_*$.
\end{conjecture}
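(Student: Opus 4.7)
The plan is to dispatch the three assertions separately, with the integral representation in Lemma~\ref{LM-elementL} serving as the main tool throughout, and with Example~\ref{ex-varphi-in-L} supplying the inclusion $f_\alpha\in\mathcal{L}_\alpha$.

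\emph{Part (a).} The nested inclusion $\mathcal{L}_\alpha\subseteq\mathcal{L}_\beta$ for $\alpha\le\beta$ is immediate from the defining inequality $\Re f'(z)\ge-\alpha\ge-\beta$. The identity $\mathcal{L}_0=\mathcal{R}$ is tautological. For $\mathcal{L}_{-1}$ I would observe that any $f\in\mathcal{A}$ with $\Re f'(z)\ge 1$ satisfies $\Re f'(0)=1$, so the minimum principle applied to the harmonic function $\Re f'$ forces $\Re f'\equiv 1$ and hence $f\equiv\Id$. The only non-routine item is the description of $\mathcal{L}_{\alpha_*}$: I would compute
\[
1+\alpha_*=\frac{1}{2(1-\log 2)},\qquad 1+2\alpha_*=\frac{\log 2}{1-\log 2},
\]
substitute back into the definition of $\varphi_{\alpha_*}$, and use $\Gauss(1,1;2;z)=-\log(1-z)/z$ to obtain
\[
\varphi_{\alpha_*}(z)=\frac{-\log 2+\Gauss(1,1;2;z)}{1-\log 2}.
\]
Inserting this into formula \eqref{L-by-int-varphi} gives exactly the asserted characterization.

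\emph{Part (b).} Fix $\alpha\in[-1,\alpha_*]$, $\lambda\in\C$ and $r\in[0,1]$. For any $f\in\mathcal{L}_\alpha$, Lemma~\ref{LM-elementL} supplies a probability measure $\mu$ on $\partial\D$ with $f(z)/z=\int_{\partial\D}\varphi_\alpha(z\overline\zeta)\,d\mu(\zeta)$. Pulling $\Re(\lambda\,\cdot\,)$ inside the integral by linearity and bounding the integrand pointwise by $\min_{|w|=r}\Re(\lambda\varphi_\alpha(w))$ (valid since $|z\overline\zeta|=r$ whenever $|z|=r$) yields
\[
\min_{|z|=r}\Re\!\left(\lambda\frac{f(z)}{z}\right)\ge\min_{|w|=r}\Re\bigl(\lambda\varphi_\alpha(w)\bigr)=\min_{|z|=r}\Re\!\left(\lambda\frac{f_\alpha(z)}{z}\right),
\]
which is precisely the total extremality condition of Definition~\ref{def-extrem}.

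\emph{Part (c).} Example~\ref{ex-varphi-in-L} already establishes $f_\alpha\in\mathcal{L}_\alpha$, so it remains to show $f_\alpha\notin\mathcal{L}_s$ for $s<\alpha$. I would compute
\[
\Re f_\alpha'(z)=-(1+2\alpha)+2(1+\alpha)\Re\frac{1}{1-z}
\]
and apply the elementary sharp bound $\inf_{z\in\D}\Re(1-z)^{-1}=\tfrac12$ (approached but not attained as $z\to-1$) to conclude $\inf_{z\in\D}\Re f_\alpha'(z)=-\alpha$ with strict inequality everywhere. Hence for any $s<\alpha$ the defining condition $\Re f_\alpha'(z)\ge-s$ fails somewhere in $\D$, which places $f_\alpha\in\partial\mathcal{L}_\alpha$ and simultaneously proves strictness of the filtration. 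The main obstacle I foresee is purely algebraic: verifying the hypergeometric identification of $\mathcal{L}_{\alpha_*}$ in part (a), as everything else rests on direct application of Lemma~\ref{LM-elementL} together with the identity $\inf_{z\in\D}\Re(1-z)^{-1}=1/2$.
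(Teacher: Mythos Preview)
Your write-up does not address the statement given. The statement is Conjecture~A, which asserts that $\partial\mathcal{L}_\alpha$ \emph{equals} the set of rotations $\{e^{-i\theta}f_\alpha(e^{i\theta}z):\theta\in\R\}$. What you have written is a proof of Theorem~\ref{th-F-alpha}, whose three parts (a), (b), (c) concern the filtration structure; in particular part~(c) only asserts the single membership $f_\alpha\in\partial\mathcal{L}_\alpha$, not a full description of the boundary.

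Even granting that your argument for (c) extends immediately to rotations (it does, since $\Re\bigl(e^{-i\theta}f_\alpha(e^{i\theta}z)\bigr)'=\Re f_\alpha'(e^{i\theta}z)$), this yields only the inclusion $\{e^{-i\theta}f_\alpha(e^{i\theta}z):\theta\in\R\}\subseteq\partial\mathcal{L}_\alpha$. The substance of the conjecture is the reverse inclusion: that \emph{every} $f\in\mathcal{L}_\alpha$ with $\inf_{z\in\D}\Re f'(z)=-\alpha$ must be a rotation of $f_\alpha$. Nothing in your proposal touches this direction. In the integral-representation language of Lemma~\ref{LM-elementL}, you would need to show that the only probability measures $\mu$ on $\partial\D$ for which $z\mapsto\int_{\partial\D}\varphi_\alpha(z\overline\zeta)\,d\mu(\zeta)$ has real-part infimum exactly $\varphi_\alpha(-1)$ are the Dirac masses; this is not obvious and is precisely why the paper records the statement as an open conjecture rather than a theorem.
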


The criterion for the analytic extension in sector  $\Lambda\left(\frac\pi2 (1-B(\mathcal{L}_\alpha) )\right)$  is $B(\mathcal{L}_\alpha)<1$.  
We have
$B(\mathcal{L}_\alpha)=\displaystyle\frac{2}{\pi} \cdot \max_{0<\theta<\pi} \arg \varphi _\alpha (e^{i\theta}).$
\vspace{-2mm}
\begin{question}
 Find explicitly or estimate the function $B(\mathcal{L}_\alpha)$ for $-1\leq\alpha \leq\alpha_*$. 
\end{question}

Recall that every function $f\in  \mathcal{L}_\alpha $  admits the representation  $$f(z)=z\int_{\partial\D}\varphi_\alpha(z\overline{\zeta}) d\mu(\zeta),$$ 
where $\displaystyle\varphi_\alpha(z)=-(1+2\alpha)+2(1+\alpha)_2F_1(1,1;2;z), \  z\in \D$\\ is univalent convex function for $\alpha>-1$.  Hence, $\displaystyle\frac{f(z)}{z}\prec \varphi_\alpha(z)$.

Thus, there exists $\omega \in \Hol(\D)$ with $\omega(0)=0$ such that  $u(z):=\displaystyle\frac{f(z)}{z}=\varphi_\alpha\left((\omega(z)\right)$, $u(0)=1$  
$\quad\Rightarrow$ $\quad\omega(z)=\varphi_\alpha^{-1}\circ u(z)$. 
\begin{question}
Determine conditions under which a function  $\omega\in \Hol(\D)$ with ${\omega(0)=0}$ can be represented as $\ \omega(z)=\varphi_\alpha^{-1}\left( \int_{\partial\D}\varphi_\alpha(z\overline{\zeta}) d\mu(\zeta)\right)$ for some $-1<\alpha \leq \alpha_*$.
\end{question}

\bigskip

\vspace{2mm}

\end{document}